\newtheorem{theorem}{Theorem}
\newtheorem{corollary}{Corollary}[theorem]
\theoremstyle{definition}
\newtheorem{definition}{Definition}
\newtheorem*{remark}{Remark}
\definecolor{darker_green}{RGB}{0, 180, 0}
\definecolor{weak_green}{RGB}{150, 220, 150}
\definecolor{calmer_blue}{RGB}{80, 80, 180}
\definecolor{calmer_red}{RGB}{220, 10, 10}
\definecolor{weak_red}{RGB}{220, 110, 110}
\definecolor{grey}{RGB}{170, 170, 170}
\newcommand{\thickhline}{%
    \noalign {\ifnum 0=`}\fi \hrule height 1pt
    \futurelet \reserved@a \@xhline
}
\newcolumntype{"}{@{\hskip\tabcolsep\vrule width 1pt\hskip\tabcolsep}}
\newcolumntype{R}[1]{>{\raggedright\arraybackslash}m{#1}}
\newcolumntype{C}{>{$}c<{$}}
\title{An efficient model for the preemptive single machine scheduling of equal-length jobs}
\author[1]{Artem Fomin\corref{email1}\fnref{fn1}}
\author[1]{Boris Goldengorin\corref{cor1}\fnref{fn2}}
\ead{goldengorin@gmail.com}
\address[1]{Department of Discrete Mathematics, Phystech School of Applied Mathematics and Informatics, Moscow Institute of Physics and Technology,  Institutsky lane 9, Dolgoprudny, Moscow region, 141700}
\date{}
\begin{document}

\newlength{\oldintextsep}
\setlength{\oldintextsep}{\intextsep}

\setlength\intextsep{0pt}

\normalsize

\begin{abstract}
We propose a Boolean Linear Programming model for the preemptive single machine scheduling problem with equal processing times, arbitrary release dates and weights(priorities) minimizing the total weighted completion time. Almost always an optimal solution of the Linear Programming relaxation is integral and can be straightforwardly converted into an optimal schedule. To deal with the fractional solutions we present two heuristics. Very often our heuristics find solutions with objective function values equal to the lower bound found by the Linear Programming relaxation. For the cases when upper bound returned by our heuristics differs from the lower bound we embed the bounds into a Branch and Bound algorithm, which solves the problem to optimality. Exhaustive computational study showed that the algorithm substantially surpasses state-of-the-art methods. 
\end{abstract}

\begin{keyword}
scheduling, preemptions, total weighted completion time, equal processing times, branch and bound
\end{keyword}

\maketitle

\section{Introduction}
\label{sec:1}

We study the preemptive single machine scheduling problem with the Total Weighted Completion Time (TWCT) objective function. Based on the three field notation of \cite{Graham}, this problem is usually denoted as $1|pmtn; p_j = p; r_j|\sum w_jC_j$. The first field shows the number of machines, which is one in our case. The second indicates the job characteristics: $pmtn$ means that jobs can be preempted, $p_j = p$ indicates that all processing times are equal to $p$, and $r_j$ shows that jobs have arbitrary release times. In the third we define the objective function to be minimized. Here $w_j$ and $C_j$ are the weight and completion time of job $j$ respectively, and $\sum w_j C_j$ is the total weighted completion time function. According to \cite{compl} the problem's complexity status is still unknown. 

The literature devoted to the exact solution of our problem is rather scarce and we refer the reader to the recent publications \cite{Jar_Erk_2017} for the equal-length jobs case and \cite{Jar_Erk_2020} for the general case.

We start our paper by formulating and proving several properties of optimal schedules. All properties apply to a wide range of objective functions and most of them are valid for non-equal processing times. Some of the properties were previously studied in \cite{properties} and \cite{WSRPT}.

Based on our optimal schedule properties we formulate a Boolean Linear Programming (BLP) model to solve the scheduling problem. Our BLP model proved itself to be quite applicable, as its LP relaxation almost always returns an optimal integral solution, which can easily be converted into a feasible optimal schedule. For the rare cases when an optimal solution to the LP relaxation is fractional we propose two heuristics. We embed our two heuristics into the Branch and Bound (BnB) algorithm with the aforementioned LP relaxation solution as the lower bound to verify the optimality of the returned heuristic solutions.

We perform an exhaustive computational study to evaluate heuristics' and exact BnB algorithm's performance. Exact algorithms far surpasses previous state-of-the-art methods (\cite{Jar_Erk_2017}) in both CPU times and size of the solved scheduling problems. Our two heuristics also show great results, outperforming the accuracy of the counterpart heuristics \cite{lp_heuristics} and \cite{WSRPT}.

We also show that our model is flexible and can be used for many different objective functions used in single machine scheduling, which include, but are not limited to Weighted Number of Tardy Jobs or Total Weighted Tardiness. Deadlines and unavailability periods can also easily be incorporated in the model. 

The rest of the paper is organized as follows. Several necessary properties of optimal schedules are proven in section \hyperref[sec:2]{2}. The BLP model is presented in section \hyperref[sec:3]{3}. Our heuristics, intuitions behind them and BnB algorithm are detailed in section \hyperref[sec:4]{4}. Computational study results are summarized in section \hyperref[sec:5]{5}. Section \hyperref[sec:6]{6} proposes various possible modifications of our BLP model. Section \hyperref[sec:7]{7} contains concluding remarks and future research directions.

\subsection{Problem formulation}

The problem $1|pmtn;r_j;p_j|\sum w_j C_j$ can be described as follows. We are given $n$ jobs that need to be processed. The jobs are processed on a single machine, which means that at each point in time no more than one job can be processed. Each job has a release time $r_j$, processing time $p_j$ and a priority weight $w_j$, which are all assumed to be integer. Job $j$ becomes available for processing at the moment $r_j$ and must be processed for the time $p_j$. If it is specified that $p_j=p$, it means that jobs have equal processing time $p$. Preemptions of one job in favor of another are allowed at any time and any number of times. A schedule is called feasible, if it distributes jobs' processing in time in a way that does not violate release times and each job $j$ is processed for exactly $p_j$ time. Earliest point in time, at which the job is completed --- processed for $p_j$ time --- is called completion time and denoted as $C_j$. The objective is to find a feasible schedule, which minimizes $\sum_{j=1}^n w_jC_j$ --- the Total Weighted Completion Time (TWCT).

\section{Properties of optimal schedules}
\label{sec:2}

In this section we formulate and prove several properties of optimal schedules. The properties are inherent for more general problem $1|pmtn; r_j, p_j|F(C_j)$, where processing times $p_j$ may differ and objective function $F(C_j)$ is any increasing function of completion times. The proofs will remain correct for non-decreasing functions as well, but only if statements are changed from "Optimal schedules have a property" to "There is an optimal schedule with a property". More on that at the end of the section. The only property specific to the equal-length jobs setting will show itself to be extremely powerful in modelling the scheduling problem. 

\begin{remark}
\renewcommand{\baselinestretch}{1.5} 
We will suppose that an optimal solution does not contain idle time intervals. Because if idle time interval is necessary, then there is a point in time, before which a set of jobs $A$ is fully processed, but jobs in $J\setminus A$ (where $J$ denotes the set of all given jobs) are not yet available for processing. Then the problem can be decomposed into two subproblems: the problem of scheduling jobs in $A$ and the problem of scheduling jobs in $J\setminus A$. And if an idle time interval is not necessary, then an optimal schedule obviously will not contain it. Or, if the function is non-decreasing, \emph{there is} a schedule, which does not contain idle time.
\end{remark}

\begin{theorem}
In an optimal schedule for the problem $1|pmtn; r_j, p_j|F(C_j)$ preemptions occur at unit points in time only.
\end{theorem}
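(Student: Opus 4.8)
The plan is to run an exchange argument, using as the lever the fact that a \emph{preemption} of a job $a$ at a time $t$, by definition, leaves $a$ unfinished, so $a$ is processed again on some later piece --- and that later processing is exactly the slack one needs to reshuffle things. First I would invoke the Remark to assume the schedule has no idle time and, after the decomposition into independent blocks described there, that inside each block the machine works without interruption from an integer starting time; since every $p_j$ is an integer the block's makespan is then an integer as well, so the pieces (the maximal single-job intervals) sit back to back starting at an integer.

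Now suppose an optimal schedule $\sigma$ preempts a job $a$ at a non-integer time $t$: a piece of $a$ ends at $t$, a different job $b$ occupies the next piece $[t,t']$, and $a$ is processed again later, resuming at some $u_1\ge t'$. If $r_b=t$ then $t$ is an integer and we are done, so assume $r_b<t$; then $b$ is available throughout a left neighbourhood of $t$, while $a$ is available from $r_a$ onward. I would rearrange only the processing that occurs between $t$ and $u_1$ (together with the piece of $a$ ending at $t$): slide each job of this ``gap'' as far left as its own release date permits, which pushes $a$'s piece ending at $t$ rightward. Since every displaced gap job is moved earlier, not later, while $a$'s later processing is untouched, $a$'s completion time and every completion time from $u_1$ on are unchanged and no completion time rises --- and when $F$ is strictly increasing any move that actually changes something strictly lowers the objective, which gives the ``in an optimal schedule'' form (a merely non-decreasing $F$ yields only ``there is an optimal schedule'', the weakening noted in the preamble). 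Each reshuffle either merges a piece of $b$, or of some gap job, with an adjacent piece of the same job --- strictly lowering the number of pieces --- or pins $a$'s preemption exactly onto a release date, hence onto an integer; iterating over the preemptions should end with all of them at release dates.

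The step I expect to be the real obstacle is making this airtight: a single reshuffle can replace the non-integer preemption of $a$ at $t$ by a non-integer preemption at the \emph{start} of the next gap piece, so one has to exhibit a monovariant --- the number of pieces, or, at a fixed piece count, the sum of the fractional parts of the preemption times, or a lexicographic combination of these --- that strictly decreases each time, and argue that a reshuffle which does not drop the piece count really does land a preemption on a release date. A cleaner but weaker alternative, which sidesteps the case analysis, is to fix the order of the pieces of $\sigma$ and regard their lengths as the variables of the polytope defined by the equalities ``the lengths of the pieces of job $j$ sum to $p_j$'' and the prefix-sum inequalities ``piece $i$ begins no earlier than $r_{\pi(i)}$''; all data are integral and the constraint matrix is totally unimodular, so the polytope is integral, and since each $C_j$ --- hence $\sum_j w_j C_j$ --- is linear in the piece lengths, the optimum is attained at an integral vertex, i.e.\ at a schedule with all breakpoints, a fortiori all preemptions, at integers. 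The one nontrivial point on that route is the total-unimodularity claim.
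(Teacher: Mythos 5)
Both of your routes diverge from the paper's argument, and both leave real gaps. Route 1 (the slide-and-reshuffle argument) is incomplete precisely where you say you expect trouble: you never exhibit a monovariant that provably decreases, and the claim that a reshuffle which fails to reduce the piece count must pin a preemption to a release date is asserted, not argued. Worse, the assertion that ``any move that actually changes something strictly lowers the objective'' is false --- sliding a gap job left need not change \emph{any} completion time (for instance when that job still has later processing outside the gap), so you get no strict improvement, no contradiction with optimality, and hence no escape from having to prove termination of an iteration that could a priori cycle or converge to irrational limits. You are also aiming at the strictly stronger statement that all preemptions occur at release dates (true, and cited by the paper from the WSRPT literature), which is harder than what the theorem asks.

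Route 2 is closer to salvageable: the matrix whose rows are the nested prefix indicators together with the pairwise-disjoint job-block indicators is indeed totally unimodular (Ghouila--Houri: alternate signs along the chain of prefixes so each column's prefix contribution lies in $\{0,1\}$, then give every block row sign $-1$), so the fixed-piece-order polytope is integral. But this argument only works when the objective is \emph{linear} in the piece lengths; the theorem is stated for an arbitrary increasing $F(C_j)$, and the minimum of a general increasing function over a polytope need not be attained at a vertex (consider $F=\max_j C_j$). Even in the linear case the vertex argument delivers only ``\emph{there exists} an optimal schedule with integral preemptions,'' whereas the theorem asserts the property of \emph{every} optimal schedule, which requires a strict-improvement argument. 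The paper gets all of this with one local swap: take the \emph{earliest} fractional preemption time $t$, observe that the unit interval $[\lfloor t\rfloor,\lceil t\rceil]$ contains fractions $\lambda_i,\lambda_j$ of two jobs and that, because $t$ is earliest, at least $1-\lambda_i\ge\lambda_j$ of job $i$'s processing remains after $\lceil t\rceil$; swapping the latest $\lambda_j$ of that processing with job $j$'s fraction inside the unit interval strictly decreases the completion time of whichever job finishes first and leaves every other completion time unchanged --- an immediate contradiction for any strictly increasing $F$, with no induction, no termination argument, and no restriction to linear objectives.
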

\begin{proof}
We will prove the theorem by contradiction. Take an optimal schedule. Assume there are preemptions at fractional points in time. Choose the earliest such point in time $t$. Let $\underline{t} = \lfloor t \rfloor$ (greatest integer less than or equal to $t$) and $\overline{t} = \lceil t \rceil$ (least integer greater than or equal to $t$). As there is a preemption at point $t$, there are multiple jobs assigned to be processed in the time interval $[ \, \underline{t}, \overline{t}\, ]$. Select any two $i$ and $j$ out of them. Let the fractions of the jobs $i$ and $j$ processed in $[ \, \underline{t}, \overline{t}\, ]$ be $\lambda_i$ and $\lambda_j$ respectively. Since $t$ is the earliest fractional preemption point, only integral parts of jobs $i$ and $j$ were processed before the point $\underline{t}$, hence, integral parts are left for processing after $\underline{t}$. Which means that there is at least $1-\lambda_i$ processing of job $i$ and at least $1-\lambda_j$ processing of job $j$ done after $\overline{t}$. Without loss of generality, we will assume that completion time of job $i$ is less than completion time of job $j$. 

Then, we find latest (not necessarily continuous) $\lambda_j$ amount of job $i$'s processing which is done after $\overline{t}$, and swap it with job $j$'s processing in the $[ \, \underline{t}, \overline{t}\, ]$ interval. Finding that amount is always possible, because $\lambda_i + \lambda_j \leq 1$, since they fit in a unit time interval $[ \, \underline{t}, \overline{t}\, ]$. Therefore, $\lambda_j \leq 1 - \lambda_i$. And there is at least $1 - \lambda_i$ of job $i$'s processing after $\overline{t}$, hence, at least $\lambda_j$. And swapping is possible, because $r_i$ is integer, therefore $r_i \leq t \Rightarrow r_i \leq \underline{t}$. 

That swap does not affect job $j$'s completion time, because the parts swapped are strictly prior to it. And job $i$'s completion time is now earlier, because its latest processing intervals have been swapped to an earlier time. Which means that we have found a solution with a lesser objective function value than the optimal schedule has. Which brings us to contradiction.
\end{proof}

Now we will separate each job $j$ into $p_j$ job-parts $j.1, j.2, \ldots , j.p_j$ and time into time intervals $1, 2, \ldots , T$, where $t$ stands for the time interval $(t-1, t]$ and $T=\sum_{j=1}^n p_j$ is the time horizon.

\begin{corollary}
In an optimal schedule for the problem $1|pmtn; r_j, p_j|F(C_j)$ every job-part is entirely assigned to one time interval.
\end{corollary}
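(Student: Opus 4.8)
The plan is to obtain this as a sharpening of Theorem~1: I would show that in an optimal schedule every unit interval $(t-1,t]$ is filled by the processing of a single job for its entire length. This is equivalent to the statement of the corollary, since the job-parts $j.1,\dots,j.p_j$ are by definition the consecutive unit-length chunks of job $j$'s processing, so a single-job, full-unit block in each interval means precisely that no job-part straddles an interval boundary.

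First I would fix an optimal schedule and invoke the Remark to assume it has no idle time, so that on $[0,T]$ the machine is continuously busy and each of the $T$ unit intervals carries exactly one unit of processing. Then I would argue by induction on $t$ that $(t-1,t]$ is devoted to a single job and that, by time $t$, every job has been processed for an integer amount of time. For the inductive step, let $i$ be the job processed immediately after time $t-1$. By the induction hypothesis all processing before $t-1$ consists of whole unit-interval blocks, so $i$ has received an integer amount $m_i<p_i$ of processing by time $t-1$; hence it still needs at least $p_i-m_i\ge 1$ units and cannot be completed before time $t$, even running for the whole interval. Consequently, if some other job were processed somewhere in $(t-1,t)$, job $i$ would be interrupted at a fractional time while still unfinished --- a preemption at a non-integer point, contradicting Theorem~1. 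So $i$ occupies all of $(t-1,t]$ and has received $m_i+1\in\mathbb{Z}$ units by time $t$, closing the induction.

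The main obstacle is the subtle point that Theorem~1 rules out only fractional \emph{preemptions}, not fractional \emph{completions}: a priori a job might finish in the middle of an interval and be replaced there without any preemption occurring. The induction is exactly what eliminates this loophole --- once we know that all processing up to time $t-1$ has come in whole units, the job running from $t-1$ cannot possibly complete before $t$, so any mid-interval switch must be a genuine (hence, by Theorem~1, impossible) fractional preemption. Everything after that is bookkeeping: with each unit interval realized as a single-job block, job-part $j.k$ is simply the $k$-th such block assigned to $j$ and therefore sits inside one interval, and integrality of the release dates $r_j$ guarantees this assignment respects the release constraints.
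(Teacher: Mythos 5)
Your proof is correct, but it is considerably more careful than the paper's, which disposes of the corollary in one line: ``Immediately follows from Theorem 1, because a schedule with preemptions at unit points in time will be a schedule in which every job-part is entirely assigned to one time interval.'' The extra work you do is not wasted: you correctly identify that Theorem~1, read literally, only constrains \emph{preemptions}, and a job could a priori \emph{complete} at a fractional time and be succeeded by another job without any preemption occurring --- a case the paper's one-liner silently ignores. Your induction (all processing up to $t-1$ comes in whole unit blocks, so the job running at $t-1^+$ still needs at least one full unit and cannot finish inside $(t-1,t)$, so any mid-interval switch is a genuine fractional preemption) closes exactly this loophole, and it is sound: the base case is vacuous, the open interval $(t-1,t)$ contains no integers so every interior switch point is fractional, and the no-idle-time assumption from the Remark is invoked where needed. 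One could also note that the swap argument inside the paper's proof of Theorem~1 in fact already rules out a job completing strictly inside a unit interval shared with another job (the integral-remainder bookkeeping there forces $\lambda_i + \lambda_j \le 1$ while a completing job would contribute $\lambda_i \ge 1$), so the gap is reparable either way; but your explicit induction is the cleaner and more self-contained repair, at the cost of being longer than the intended ``immediate'' deduction.
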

\begin{proof}
Immediately follows from Theorem 1, because a schedule with preemptions at unit points in time will be a schedule, in which every job-part is entirely assigned to one time interval. 
\end{proof}

This result allows us to reduce the scheduling problem to the problem of assigning job-parts to time intervals. 

\begin{definition}
In a specific schedule job $i$ \textit{intersects} job $j$, if there is job $i$'s processing assigned in-between of some of job $j$'s processing. The in-between processing is called \textit{intersecting}.
\end{definition}

\vspace{0.5cm}
\begin{figure}[h]
\captionbox[Caption]{Job $j$ is intersected by job $i$}{
\psset{labels=none, xunit=0.5cm, yunit=0.5cm, yAxis=false}	
\begin{pspicture}(0,0)(20,0)	
\psaxes[Dx=5, subticks=5]{->}(0,0)(0,0)(22,0)	
\pcline[linewidth=3pt, linecolor=calmer_red]{-}(6,0)(9,0)
\pcline[linewidth=3pt, linecolor=calmer_blue]{-}(11,0)(15,0)
\pcline[linewidth=3pt, linecolor=calmer_red]{-}(16,0)(20,0)
\uput{8pt}[90](7.5,0){$j$}
\uput{8pt}[90](13,0){$i$}
\uput{8pt}[90](18,0){$j$}
\end{pspicture}}
\end{figure}
\vspace{0.5cm}

\begin{definition}
In a specific schedule jobs $i$ and $j$ are \textit{intersecting}, if $i$ intersect $j$ and vice versa.
\end{definition}

\vspace{0.5cm}
\begin{figure}[h]
\captionbox[Caption]{Intersecting jobs $i$ and $j$}{
\psset{labels=none, xunit=0.5cm, yunit=0.5cm, yAxis=false}	
\begin{pspicture}(0,0)(20,0)	
\psaxes[Dx=5, subticks=5]{->}(0,0)(0,0)(22,0)	
\pcline[linewidth=3pt, linecolor=calmer_blue]{-}(1,0)(3,0)
\pcline[linewidth=3pt, linecolor=calmer_red]{-}(6,0)(9,0)
\pcline[linewidth=3pt, linecolor=calmer_blue]{-}(11,0)(13,0)
\pcline[linewidth=3pt, linecolor=calmer_red]{-}(16,0)(20,0)
\uput{8pt}[90](2,0){$i$}
\uput{8pt}[90](7.5,0){$j$}
\uput{8pt}[90](12,0){$i$}
\uput{8pt}[90](18,0){$j$}
\end{pspicture}}
\end{figure}
\vspace{0.5cm}

\begin{theorem} 
In an optimal schedule for the problem $1|pmtn; r_j, p_j|F(C_j)$ there are no intersecting jobs.
\end{theorem}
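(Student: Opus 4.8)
The plan is to prove this by contradiction with a single swapping argument, parallel to the proof of Theorem 1. Fix an optimal schedule. By Theorem 1 and Corollary 1 we may work in the job-part / time-interval picture, where every unit interval $1,\dots,T$ holds exactly one job-part. Suppose some two jobs $i$ and $j$ are intersecting. Their completion times are distinct integers, so after possibly renaming we may assume $C_i<C_j$. Now I would unfold the hypothesis: since $i$ and $j$ are intersecting, in particular $j$ intersects $i$, so there are intervals $x<y<z$ with $i$-parts in intervals $x$ and $z$ and a $j$-part in interval $y$. Because interval $z$ holds a part of $i$ we have $z\le C_i$, hence $x<y<z\le C_i<C_j$.

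Next I would describe the exchange: swap the $j$-part lying in interval $y$ with the \emph{last} part of $i$, which lies in interval $C_i$. This exchange is feasible. It only relocates parts of $i$ and $j$, so the machine still processes one job-part per interval and every other job is untouched; and the release dates are respected, because $i$ is already processed in the earlier interval $x$, giving $r_i\le x-1\le y-1$, while $j$ is already processed in interval $y$, which satisfies $y\le C_i-1$, giving $r_j\le y-1\le C_i-1$ — this is exactly the place where integrality of the release dates is used, just as in Theorem 1.

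Then I would track the objective. After the swap, the intervals used by $i$ are the old ones with $C_i$ deleted and $y$ inserted; since $y<C_i$ and $C_i$ was the largest of them, the new completion time of $i$ is strictly below $C_i$. The intervals used by $j$ are the old ones with $y$ deleted and $C_i$ inserted; since $y<C_i<C_j$ and $C_j$ is still present, $C_j$ is unchanged, and no other completion time changes. As $F$ is increasing in the completion times, the new schedule has strictly smaller objective value, contradicting optimality. (If $F$ is only non-decreasing, the same computation gives a schedule that is no worse, which yields the "there is an optimal schedule with the property" version mentioned at the end of the section.)

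The only place where real care is needed is the opening reduction. One must exploit the \emph{full} strength of "intersecting" — that each of the two jobs is sandwiched between two parts of the other — so that the job with the larger completion time (here $j$) is the one appearing squeezed between two $i$-parts; this is what lets us slide the last part of the earlier-finishing job $i$ strictly to the left while leaving $C_j$ intact. With only one direction of the intersection together with the unfavourable completion-time order, the analogous swap merely produces another optimal schedule rather than a strict improvement, so identifying the correct part to relocate is the crux; everything else is the bookkeeping above.
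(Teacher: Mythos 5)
Your proof is correct and follows essentially the same route as the paper: with $C_i<C_j$, take the $j$-processing sandwiched between two pieces of $i$-processing, swap it with $i$'s latest processing, and observe that $C_i$ strictly decreases while $C_j$ and all feasibility constraints are preserved. The only cosmetic difference is that you perform the swap on discrete job-parts (leaning on Theorem 1 and Corollary 1), whereas the paper swaps an arbitrary positive amount of processing directly in the continuous setting.
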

\begin{proof}
We will prove the theorem by contradiction. Assume that there are intersecting jobs $i$ and $j$ in an optimal schedule. Without loss of generality, we will assume that the job $i$'s completion time is less than job $j$'s. Then take some of job $j$'s intersecting processing and swap it with the same amount of job $i$'s latest processing. Any positive amount of processing will suffice. The intersecting processing can only be earlier than job $i$'s latest processing. And it is later than some other job $i$'s processing, hence, later than $r_i$ and such swap is possible. Job $j$'s completion time has not changed, because it is later than any of job $i$'s processing. But job $i$'s completion is now earlier, because we moved the latest processing to an earlier time. Which brings us to a contradiction to the optimality of the initial schedule.
\end{proof}

\vspace{0.7cm}
\begin{figure}[h]
\captionbox[Caption]{Theorem 2 swap example}{
\psset{labels=none, xunit=0.5cm, yunit=0.5cm, yAxis=false}	
\begin{pspicture}(0,0)(20,0)	
\psaxes[Dx=5, subticks=5]{->}(0,0)(0,0)(22,0)	
\pcline[linewidth=3pt, linecolor=calmer_blue]{-}(1,0)(3,0)
\pcline[linewidth=3pt, linecolor=calmer_blue]{-}(6,0)(7,0)
\pcline[linewidth=3pt, linecolor=calmer_red]{-}(7,0)(9,0)
\pcline[linewidth=3pt, linecolor=calmer_blue]{-}(11,0)(12,0)
\pcline[linewidth=3pt, linecolor=calmer_red]{-}(12,0)(13,0)
\pcline[linewidth=3pt, linecolor=calmer_red]{-}(16,0)(20,0)
\uput{8pt}[90](2,0){$i$}
\uput{8pt}[90](18,0){$j$}
\end{pspicture}}
\end{figure}
\vspace{0.5cm}

Fig. 3 provides an example of the swap described in Theorem 2, applied to the intersecting jobs $i$ and $j$ from Fig. 2.

\begin{corollary}
In an optimal schedule for the problem $1|pmtn; r_j, p_j|F(C_j)$ if job $i$ is partially processed between processing intervals of job $j$, then job $i$ is fully processed between these intervals.
\end{corollary}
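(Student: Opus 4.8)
The plan is to read the hypothesis through Definition 3 and then apply Theorem 2. Saying that job $i$ is partially processed between processing intervals of job $j$ is, by Definition 3, exactly saying that job $i$ intersects job $j$. Theorem 2 tells us that in an optimal schedule $i$ and $j$ cannot be intersecting, so job $j$ does not intersect job $i$; that is, no processing of job $j$ is ever placed strictly between two pieces of job $i$'s processing. This is the one external fact the whole argument rests on.

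Next I would argue by contradiction. Fix a pair of processing intervals $P$ (the earlier one) and $Q$ (the later one) of job $j$ such that some of job $i$'s processing falls in the gap between them, and suppose job $i$ is \emph{not} entirely processed inside that gap. Then job $i$ has some processing located either before $P$ or after $Q$. In the first case, read in time order: a piece of job $i$ before $P$, then the interval $P$ (processing of job $j$), then a piece of job $i$ lying in the gap between $P$ and $Q$; this exhibits processing of job $j$ sandwiched strictly between two pieces of job $i$'s processing, so job $j$ intersects job $i$. In the second case the same conclusion follows using the piece of $i$ in the gap, the interval $Q$, and the piece of $i$ after $Q$. Either way we contradict the previous paragraph, so all of job $i$'s processing must lie between $P$ and $Q$, which is the claim.

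The only step that needs a moment of care — and it is the main (very mild) obstacle — is checking that the cited portion of job $j$ really lies \emph{strictly} between two portions of job $i$'s processing, rather than merely touching them at an endpoint. This is handled by the single-machine assumption: jobs $i$ and $j$ are never processed at the same instant, so no endpoint of $P$ or $Q$ can coincide with an instant at which job $i$ is being processed, and the strict nesting required by the definition of ``intersects'' genuinely holds. Everything else is bookkeeping of the time order, and the ``before $P$'' and ``after $Q$'' cases are mirror images of each other, so only one of them needs to be written out in detail.
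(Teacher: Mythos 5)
Your proposal is correct and follows essentially the same route as the paper's own proof: note that the hypothesis means $i$ intersects $j$, then show that any processing of $i$ outside the gap would force $j$ to intersect $i$ as well, contradicting Theorem 2. The extra remark about strictness at interval endpoints is a harmless refinement the paper leaves implicit.
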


\begin{proof}
If job $i$ is not fully processed, then either there is some of its processing before the earlier interval of job $j$ or some of its processing after the latter. Job $j$ is already intersected by job $i$. And in the first case, the earlier processing interval of job $j$ would be between processing intervals of job $i$; in the second case, the latter processing interval of job $j$ would be between processing intervals of job $i$. Either way, job $i$ would also be intersected by job $i$, which is impossible by Theorem 2.
\end{proof}

\vspace{1cm}
\begin{figure}[h!]
\captionbox[Caption]{Cor. 2.1 example}{
\psset{labels=none, xunit=0.5cm, yunit=0.5cm, yAxis=false}	
\begin{pspicture}(0,0)(20,0)	
\psaxes[Dx=5, subticks=5]{->}(0,0)(0,0)(22,0)	
\pcline[linewidth=3pt, linecolor=weak_green]{-}(1,0)(3,0)
\pcline[linewidth=3pt, linecolor=calmer_red]{-}(5,0)(8,0)
\pcline[linewidth=3pt, linecolor=darker_green]{-}(10,0)(12,0)
\pcline[linewidth=3pt, linecolor=calmer_red]{-}(15,0)(18,0)
\pcline[linewidth=3pt, linecolor=weak_green]{-}(19,0)(21,0)
\uput{8pt}[90](2,0){$i_1$}
\uput{8pt}[90](6.5,0){$j_1$}
\uput{8pt}[90](11,0){$i_0$}
\uput{8pt}[90](16.5,0){$j_2$}
\uput{8pt}[90](20,0){$i_2$}
\end{pspicture}}
\end{figure}
\vspace{0.5cm}

To clarify, consider an example shown in Fig. 4. $j_1$ is the earlier interval of job $j$, $j_2$ is the latter. $i_0$ is the part of job $i$, processed between intervals $j_1$ and $j_2$. $i_1$ and $i_2$ depict two possibilities: either there is some processing interval $i_1$ of job $i$ before the $j_1$ or some processing interval $i_2$ after $j_2$. It can easily be seen that both possibilities make jobs $i$ and $j$ intersecting. 

Moving on to our special case with equal processing times, there is a peculiar property, which does not apply to the general case.

\begin{corollary}
In an optimal schedule for the problem $1|pmtn; r_j, p_j=p|F(C_j)$ there are either 0, $p$, or a multiple of $p$ time intervals between the two time intervals to which one job's subsequent job-parts are assigned.
\end{corollary}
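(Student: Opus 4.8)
The plan is to read the statement off Corollary 2.1 together with the no-idle-time convention of the Remark. Fix an optimal schedule, a job $j$, and two subsequent job-parts of $j$ assigned to time intervals $a$ and $b$ with $a<b$; here ``subsequent'' is taken to mean that no processing of $j$ is assigned to any interval strictly between $a$ and $b$ --- one may always relabel the $p$ job-parts of $j$ in order of processing, which affects nothing. If $b=a+1$ there are $0$ intervals in between and there is nothing to prove, so assume $b-a-1\ge 1$.

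First I would note that each of the intervals $a+1,\ldots,b-1$ is occupied, and by a job different from $j$: it is non-idle by the Remark, and it carries no job-part of $j$ by the choice of the two subsequent job-parts. Then, for every job $i$ owning a job-part among these intervals, job $i$ is partially processed between two processing intervals of $j$ --- namely $a$ and $b$ --- so Corollary 2.1 forces $i$ to be \emph{fully} processed between $a$ and $b$; that is, all $p$ of its job-parts lie among $a+1,\ldots,b-1$.

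Finally, let $S$ be the set of jobs having at least one job-part among $a+1,\ldots,b-1$. By the previous step each $i\in S$ places exactly $p$ job-parts there, job-parts of distinct jobs are distinct, and --- since every one of these intervals is filled by such a job-part --- together they exhaust all $b-a-1$ intervals. Hence $b-a-1=p\,|S|$, a (possibly zero) multiple of $p$, which is the assertion. The only point needing care is fixing the meaning of ``subsequent job-parts'' so that no processing of $j$ intrudes between $a$ and $b$; with that pinned down, the argument is a short counting step on top of Corollary 2.1.
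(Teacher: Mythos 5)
Your proposal is correct and follows essentially the same route as the paper: the no-idle-time convention fills every intervening interval, Corollary 2.1 forces each intervening job to be wholly contained between the two parts of $j$, and counting $p$ intervals per such job gives a multiple of $p$. Your write-up merely makes the final counting step and the meaning of ``subsequent job-parts'' more explicit than the paper does.
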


\begin{proof}
There are zero time intervals between two subsequent job-parts, if the job is not interrupted at that moment. If it is interrupted, then any jobs, which are processed in-between, are processed fully before the interrupted job is continued by Corollary 2.1. To be fully processed, each one of them requires $p$ time intervals. That is, $p$ time intervals, if there is only one job, or a multiple of $p$, if several.
\end{proof}

For non-decreasing objective functions of completion times from these proofs immediately follows that \emph{there is} an optimal schedule with these properties. The schedule created by swaps in the theorems may not be better than the starting optimal one, therefore, not contradicting its optimality. It is not strictly better, but still not worse. Hence, also optimal. Which means that we can not claim that all optimal schedules have that property, but there is at least one optimal solution, which has. And finding that solution means solving the problem. Therefore, a problem with non-decreasing objective function of completion times can also be reduced to the problem of assigning job-parts to time intervals.

\section{BLP model}
\label{sec:3}

Now, using theoretical insights described in previous section, we will formulate a Boolean Linear Programming (BLP) model of the $1|pmtn; p_j = p; r_j|\sum w_j C_j$ problem. The formulation was also heavily inspired by \cite{bounds} and previously described in \cite{article}. 

First, we introduce the indices, the decision variables and define other necessary objects.

\vspace{0.5cm}
\begin{tabular}{ll}
$T = np$ & Time horizon.\\
$j \in \{1, 2, \ldots, n\}$ & Job index.\\
$k \in \{1, 2, \ldots, p\}$ & Job part index.\\
$t \in \{1, 2, \ldots, T\}$ & Time interval index.\\
$a \in \{1, 2, \ldots, n\}$ & Index for subsets of time intervals of jobs.\\
$b \in \{1,2,\ldots,p\}$ & Index for subsets of time intervals of job parts.\\
\end{tabular}
\vspace{0.5cm}

Decision variables:
\[x_{jkt} = \left\{ \begin{array}{ll} 
1 & \mbox{if job $j$'s $k$-th part is assigned to time interval $t$}, \\
0 & \mbox{otherwise}. \end{array} \right. \]

The weights for parts $1,\ldots,p-1$ are given by
\[w_{jkt} = \left\{ \begin{array}{ll}
0 & \mbox{for all}\>\> j \>\mbox{and}\>\> r_j + k - 1 \leq t \leq T-p+k, \\
\infty & \mbox{otherwise}. \end{array} \right. \]

and for part $p$ by
\[w_{jpt} = \left\{ \begin{array}{ll}
w_j t & \mbox{for all}\>\> j \>\mbox{and}\>\>  r_j + p \leq t, \\
\infty & \mbox{otherwise}. \end{array} \right. \]

Now define a subset of time intervals, which will allow us to make use of Corollary 2.2:
\[\mathcal{T}_{ab} = \left\{ \begin{array}{ll}
\emptyset & \mbox{if}\>\>b=p\>\> \mbox{and}\>\> a = n,\\
\{b, b + p, b+2p, \ldots, b + (a-1)p \}&\mbox{otherwise.}
\end{array} \right.
\]

The BLP model is formulated as follows:

\begin{eqnarray}
\min && \sum_{j=1}^{n} \sum_{k=1}^{p} \sum_{t=1}^{T} w_{jkt} x_{jkt}  \nonumber\\
\mbox{subject to}\>\>\quad \sum_{t=1}^{T} x_{jkt} &=& 1 \quad\quad j=1,\ldots,n,\>\> k=1,\ldots,p,\\
\sum_{j=1}^{n} \sum_{k=1}^{p} x_{jkt} &=& 1 \quad\quad t=1,\ldots,T,\\
\sum_{t \in \mathcal{T}_{ab}} x_{jkt} - \sum_{t \in \mathcal{T}_{ab}} x_{j,k+1,t+1} &\geq& 0 \quad\quad j=1,\ldots,n, \>\> k=1,\ldots,p-1, \\
&&\quad\quad\>\>a=1,\ldots,n, \>\> b=1,\ldots,p, \nonumber\\
x_{jkt} & \in & \{0,1\} \quad\quad j=1,\ldots,n,\>\> k=1,\ldots,p,\\
&&\quad\quad\quad\quad\>\> t=1,\ldots,T. \nonumber
\end{eqnarray}

Constraints (1) assure that each job part is assigned to exactly one time interval and constraints (2) state that exactly one job part must be assigned to each time interval. Constraints (3) state that the job parts have to be processed in the right order and that the number of time intervals between two subsequent job parts is either zero or a multiple of $p$. Note that $\mathcal{T}_{ab} = \emptyset$ when both $b=p$ and $a =n$, since the final time interval $T$ does not need to be a part of $\mathcal{T}_{ab}$: only the $p$-th part of a job can be assigned to it. Constraints (4) are Boolean constraints. \\

\begin{wraptable}{r}{0cm}
\begin{tabular}{c"c|c|c|c|c|c"}
 & 1 & 2 & 3 & 4 & 5 & 6 \\ \thickhline
1.1 & 0 & 0 & 0 & 0 & $\infty$ & $\infty$\\ \hline
1.2 & $\infty$ & 0 & 0 & 0 & 0 & $\infty$\\ \hline
1.3 & $\infty$ & $\infty$ & 3 & 4 & 5 & 6 \\ \thickhline
2.1 & $\infty$ & $\infty$ & 0 & 0 & $\infty$ & $\infty$\\ \hline
2.2 & $\infty$ & $\infty$ & $\infty$ & 0 & 0 & $\infty$\\ \hline
2.3 & $\infty$ & $\infty$ & $\infty$ & $\infty$ & 15 & 18 \\ \thickhline
\end{tabular}
\caption{Weight vector representation}
\end{wraptable}

For better understanding of the model, consider the following example: $n=2$ jobs with common processing time $p=3$, release times $r_1=1$, $r_2 = 3$ and weights $w_1=1$, $w_2=3$. The weights $w_{jkt}$ can be represented as a square matrix with $np$ rows and $np$ columns(Table 1). Each row corresponds to a job-part and each column corresponds to a time interval. The value of $w_{jkt}$ can be found in the cell defined by row $j.k$ and column $t$. 

In our model the numbering of job-parts is fixed and a job-part $j.k_2$ can not be processed before job part $j.k_1$ if $k_1 < k_2$. This ordering implies that any job $j$'s $k$-th job-part $j.k$ can not be processed before the time interval $r_j + k - 1$ and after the time interval $np - k + 1$.

In this example, job 2 becomes available for processing only at $r_2 = 3$, therefore weights $w_{2.1.1}$ and $w_{2.1.2}$ are assigned $\infty$. Then, for instance, job-part $2.2$ can not be processed before time interval 4, because the previous part $2.1$ has to be processed beforehand and the earliest it can happen is $r_2=3$. And from the other end, job-part $2.2$ can not be processed after time interval 5, because the following part $2.3$ can only be processed after $2.2$ and $2.3$ is processed not later than time interval 6.

Because only the last job-part contributes to the objective function, all other weights for job-parts $1, \ldots, p-1$ are 0. And for the last job-part $p$ in column (time interval) $t$ the cost $w_{jpt} = w_j t$ of completing job $j$ at the moment (time interval) $t$.

In programming, sum of every other number in the weight vector can used as $\infty$. The value should be greater than the optimal objective function value. Then, in terms of the scheduling problem, that value is great enough to never be considered and, in a sense, is infinite.

Note that a regular Integer Linear Programming (ILP) model is obtained when constraints (4) are replaced by \[x_{jkt} \in \mathbb{Z}_{+}\] and the BLP's Linear Programming (LP) relaxation is obtained when they are replaced by \[x_{jkt} \geq 0.\]

The model has $npT = n^2p^2$ variables and $np + T + n^2p(p-1) = O(n^2p^2)$ constraints. LP models are polynomially solvable, but since its input size depends on parameter $p$, that model is only pseudo-polynomially solvable in terms of the original problem. On the other hand, ILP and BLP models are known to be NP-complete.

\section{Algorithms}
\label{sec:4}

In this section we describe algorithms, which convert an LP solution into a Boolean one. The algorithms do not guarantee preservation of the objective function value, hence they are heuristics. In our computational study we will also involve the WSRPT heuristic (\cite{WSRPT}) which is applied to the problem $1|pmtn;r_j|\sum w_j C_j$ with arbitrary processing times. The WSRPT heuristic is one of the best known heuristics for the problem $1|pmtn;r_j|\sum w_j C_j$. Therefore, it will be useful not only in search of an optimal solution, but also for comparison with our heuristics.

\subsection{Heuristics' description}
In this subsection we describe variations of heuristics presented in \cite{article} and \cite{Bouma} as exact algorithms. 

\newpage

\textbf{Algorithm 1}
\begin{enumerate}
    \item Solve LP relaxation, obtaining the solution $x^\star$ and the optimal value $l_b$ of objective function, which will serve as a lower bound.
    \item Determine $J_I$ --- a set of jobs, for which all related variables in $x^\star$ are integral, and $J_F$ -- a set of jobs, which have at least one fractional related variable. (Both sets may be empty)
    \item Schedule every job in $J_I$ as suggested by $x^\star$.
    \item For every job $j$ in $J_F$ determine an ''estimated'' completion time as the greatest $t$, for which $x_{jpt} > 0$. 
    \item Schedule jobs in $J_F$ in the ascending order of their estimated completion times to the earliest possible time intervals. If several jobs have equal estimated completion times, choose the one that has the greatest weight. If there still are several, choose the one with the smallest index.
\end{enumerate}

\textbf{Algorithm 2}
\begin{enumerate}
    \item Solve LP relaxation, obtaining the solution $x^\star$ and the optimal value $l_b$ of objective function, which will serve as a lower bound.
    \item Determine $J_I$ --- a set of jobs, for which all related variables in $x^\star$ are integral, and $J_F$ -- a set of jobs, which have at least one fractional related variable. (Both sets may be empty)
    \item Schedule every job in $J_I$ as suggested by $x^\star$.
    \item Find the smallest $t$, for which there exists a job $j$, not yet scheduled, such that $x_{jpt} > 0$. If there are multiple such jobs, choose the one with the smallest index. If there are none, then go to step 6. 
    \item If it is possible to schedule job $j$ not later than time $t$, schedule $j$ to earliest possible time intervals. If not, remove $x_{jpt}$ from consideration. Return to step 4.
    \item If there still are unscheduled jobs, taking all $x_{jpt}$ into consideration again, find the smallest $t$, for which there exists an unscheduled job $j$ such that $x_{jpt} > 0$, and schedule job $j$ to the earliest possible time intervals. Repeat until all jobs are scheduled.
\end{enumerate}

First step of the algorithms is the most computationally expensive. Although it is hard to express the time necessary for modern industrial solvers to find a solution to the LP problem with asymptotics, it is at least $\Omega (m^2)$, where $m$ is the number of variables in the LP problem. As we have $n^2 p^2$ variables in our problem, solving LP problem will take at least $\Omega (n^4 p^4)$ time. 

Second step checks every variable, of which there are $n^2 p^2$. Third step can also easily be done with at most $O(n^2 p^2)$ operations. For the algorithm 1, step 4 can be done in $O(nT)$ time; step 5 requires sorting at most $n$ completion times, which can be done in $O(n \log n)$ time, and scheduling, which can be done in $O(T)$ time. For the algorithm 2, steps 4, 5 and 6 can be easily done in $O(npT)$ operations with the most straightforward implementation. Since $T=np$, without step 1 both algorithms can be executed in $O(n^2 p^2)$ time. It is possible to make computation more efficient at certain steps, but that is not of particular importance, because solving LP relaxation is much more time consuming, and these improvements will bear insignificant time gains. 

\subsection{An example}

\begin{wraptable}{l}{0cm}
\begin{tabular}{l"c c c c}
$j$ & 1 & 2 & 3 & 4 \\ \thickhline
$r_j$ & 1 & 4 & 3 & 2 \\ 
$w_j$ & 4 & 9 & 12 & 9 \\ 
\end{tabular}
\caption{Jobs' characteristics}
\end{wraptable}

To clarify, consider an example. The example consists of $n=4$ jobs with common processing time $p=2$ and release times and weights presented in Table 2. Now, we will go through Algorithms' steps to see how they work.

Both algorithms start by solving an LP relaxation. Corresponding LP solution is presented in the Table 3. The lower bound found by the LP relaxation is $l_b = 182$.

\begin{wraptable}{r}{0cm}
\begin{tabular}{c"c|c|c|c|c|c|c|c"}
 & 1 & 2 & 3 & 4 & 5 & 6 & 7 & 8 \\ \thickhline
1.1 & 1.0 & & & & & & &\\ \hline
1.2 & & \cellcolor{blue!50}{0.5} & & & & & & \cellcolor{red!50}{0.5} \\ \thickhline
2.1 & & & & & 0.5 & & 0.5 &\\ \hline
2.2 & & & & & & \cellcolor{blue!50}{0.5} & & \cellcolor{red!50}{0.5} \\ \thickhline
3.1 & & & 0.5 & 0.5 & & & &\\ \hline
3.2 & & & & \cellcolor{blue!50}{0.5} &\cellcolor{red!50}{0.5} & & &\\ \thickhline
4.1 & & 0.5 & & & & 0.5 & &\\ \hline
4.2 & & & \cellcolor{blue!50}{0.5} & & & & \cellcolor{red!50}{0.5} &\\ \thickhline
\end{tabular}
\caption{LP relaxation solution}
\end{wraptable}

Next, both algorithms start by distinguishing between integral and fractional jobs. Then they both schedule integral ones. In this example, there are no integral jobs and these steps are skipped in both algorithms. Following steps are different for the algorithms, so we will consider them separately.

First, consider algorithm 1. On step 4 it determines estimated completion time by finding the latest non-zero variable, associated with the last job part. These are marked in red in the table. Then it sorts the jobs by these estimated completion times and by weights, if completion times are equal. That results in the order of jobs $(3, 4, 2, 1)$. Then, it schedules the jobs in that order at the earliest possible time intervals. First, job 3 is scheduled at time intervals 3 and 4. Then job 4 is scheduled at intervals 2 and 5. Then job 2 is scheduled at intervals 6 and 7. And job 1 is left with 1 and 8. The resulting schedule is $(14334221)$. Objective function for that schedule is $4 \cdot 8 + 9 \cdot 7 + 12 \cdot 4 + 9 \cdot 5 = 188$, which is more that $l_b=182$. That example shows that Algorithm 1 is a heuristic, rather than an exact algorithm.

Moving on to Algorithm 2. On step 4, the algorithm finds the smallest $t$, for which there is a non-zero variable, associated with the last job-part of a yet unscheduled job. Initially, it's $t=2$ for $j=1$. The algorithm moves to step 5. It is possible to schedule job 1, hence, it is scheduled to the earliest possible time intervals --- 1 and 2. Then, algorithm returns to step 4 and finds next smallest $t=3$ and corresponding $j=4$. But it is not possible to schedule job 4 before time interval 3. Next earliest non-zero completion variable belongs to job 3 at $t=5$ and it is possible to schedule it before that time. Therefore, algorithm schedules it to the earliest possible time intervals 3 and 4. Note that the completion time can be earlier than the smallest $t$ found. Next smallest $t=6$ for $j=2$ is possible to schedule and job 2 is scheduled at intervals 5 and 6. There's only one unscheduled job left, which also has an associated with last job-part non-zero decision variable at $t=8$. But even if it didn't, it would have been scheduled to intervals 7 and 8 on step 6. The resulting schedule is $(11334422)$ and its objective function is $4 \cdot 2 + 9 \cdot 8 + 12 \cdot 4 + 9 \cdot 6 = 182$. Which is equal to the lower bound, hence, the solution is optimal. 

The WSRPT heuristic would construct schedule $(14433221)$, which is also optimal. 

\subsection{Intuitions behind heuristics}

Note that the algorithms are different in how they select jobs to schedule, but similar in how they schedule selected jobs. Both algorithms try to infer completion times from the solution of the LP relaxation and build a schedule with such completion times. The idea to somehow infer completion times of the jobs and then to schedule them by assigning them to earliest possible time intervals is actually sensible. Because given a set of completion times, we can easily construct a schedule with such completion times, if that is possible. 

Closely related Deadline Scheduling Problem $1|prmtn;r_j,p_j,D_j|F$ is to decide if there is a schedule such that each task can be completely executed within the interval of its release time and deadline. The algorithm to check, if a solution for $1|prmtn;r_j,p_j,D_j|F$ exists, is described in \cite{horn}. It is often referred to as Earliest Deadline First (EDF) algorithm. Basically, it schedules jobs in order of their deadlines to earliest possible time intervals. Feasible schedule exists if and only if the EDF algorithm successfully finishes. And if it exists, it is built by the EDF algorithm.

It is easy to modify algorithm for building a schedule for predetermined completion times instead of deadlines. We will assign last job-part of each job to its specified completion time. Then we will use the EDF algorithm with completion times used as deadlines and processing times less by one on that partially assigned schedule. It is easy to see that similar result holds true: a feasible solution for such completion times exists if and only if the modified EDF algorithm successfully finishes. And if it exists, it is built by the modified EDF algorithm.

That explains why the previous heuristics might be effective. Because if we infer completion times correctly, then we can easily build a schedule for them with the modified EDF algorithm --- if at all possible. And the part of converting estimated completion times to actual schedules is very similar in both Algorithm 1 and Algorithm 2. But the completion time inference part is different, allowing the algorithms to compliment each other. 

\begin{table}[]
\makeatletter
\@fpsep\textheight
\makeatother
\begin{tabular}{|r|r|r|r|r|r|r|r|r|r|}\hline
$n$ & $p$ & Instances & WSRPT & Int LP & Alg1 & Alg2 & Mean pmtn & Time, s. \\ \thickhline
\csvreader[
column count = 40,
late after line=\\\hline,
]{Tests.csv}{
1 = \one, 2 = \two, 4 = \four, 5 = \five, 10 = \ten, 11 = \eleven, 16 = \sixteen, 23 = \twentythree, 27 = \twentyseven, 28 = \twentyeight, 
}
{\one & \two & \four & \five & \ten & \eleven & \sixteen & \twentyseven & \twentythree}
\end{tabular}
\caption{Experimental results}
\end{table}

\begin{figure}[]
    \includegraphics[height=8cm]{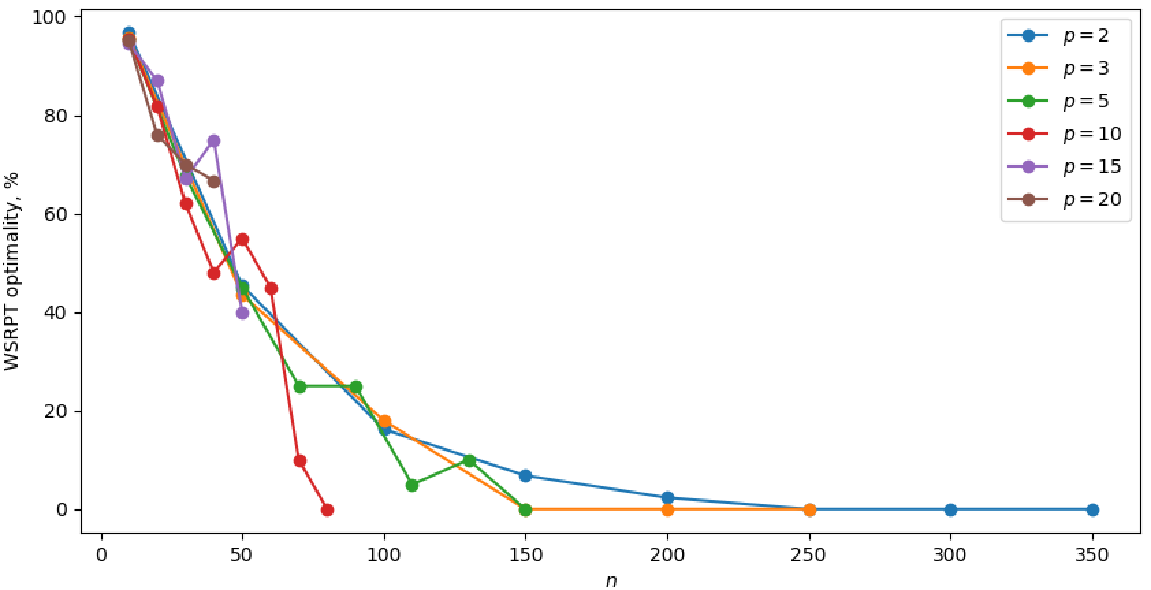}
    \caption{WSRPT optimality depending on $n$}
\end{figure}

\begin{figure}[]
    \includegraphics[height=8cm]{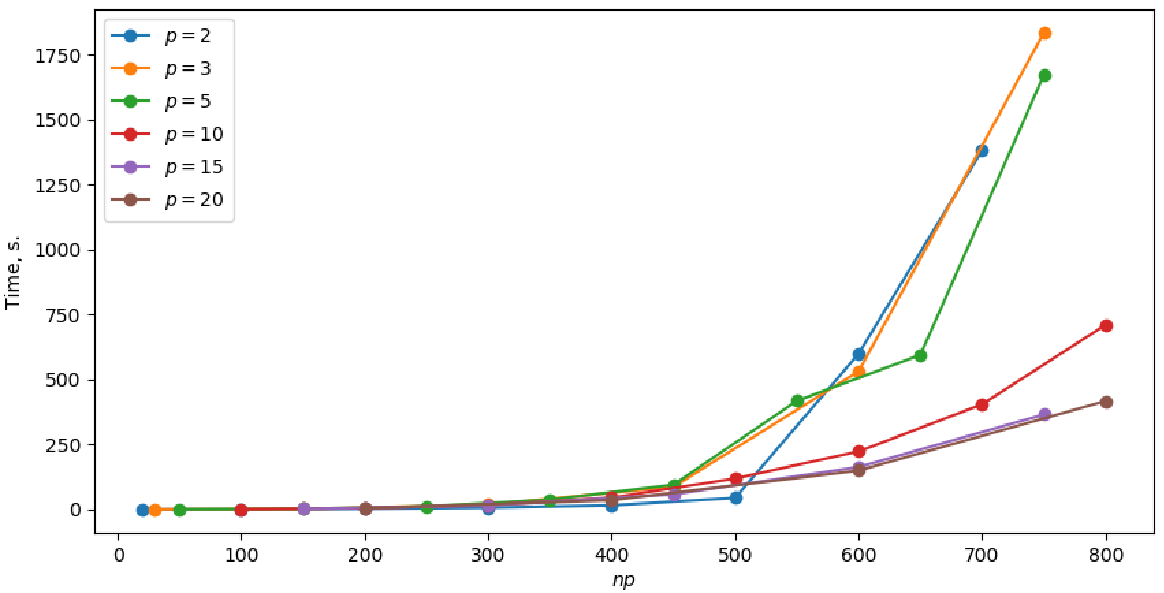}
    \caption{Time depending on problem size $np$.}
\end{figure}

\begin{figure}[]
    \includegraphics[height=8cm]{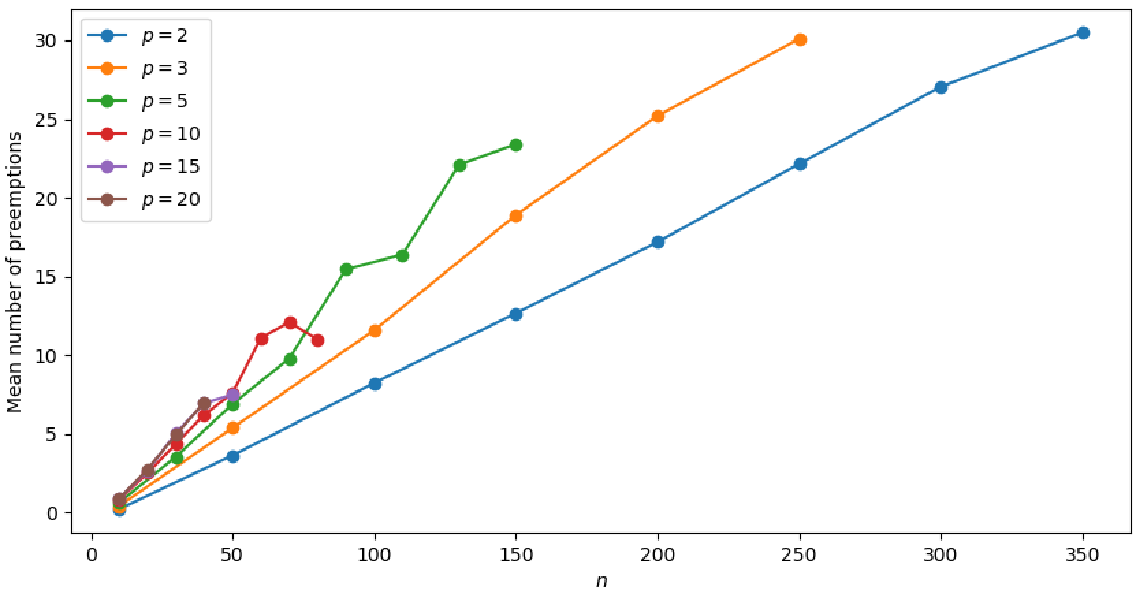}
    \caption{Mean number of preepmtions depending on $n$.}
\end{figure}

\subsection{Exact algorithm for \texorpdfstring{$1|pmtn; p_j = p; r_j| \sum w_j C_j$}{the problem}}

To find an exact solution, firstly, we solve the LP relaxation and obtain lower bound $l_b$ as the objective function optimal value. Secondly, we construct three feasible solutions using the WSRPT heuristic, Algorithm 1 and Algorithm 2. If any of them achieves $\lceil l_b \rceil$, then we've already solved the problem completely. 

If not, then we employ a Branch and Bound algorithm. It selects the largest fractional $x_{jkt}$ and considers two options: in an optimal BLP solution, $x_{jkt}$ is either 0 or 1. Options are easy to take into account with LP model by adding a corresponding constraint. Algorithm 1 and Algorithm 2 will be executed for the solution of the new LP model, possibly finding better schedule.

Branching will continue unless optimal objective value of the LP model in the node is greater or equal than the value of objective function for an already found feasible schedule. 

\section{Computational experiments}
\label{sec:5}

We performed computational experiments to evaluate the exact algorithm's and heuristics' performance. All computations were performed on a PC with i5-7300HQ processor with 3.5 GHz and 8 GB of RAM. Gurobi solver(\cite{gurobi}) was used for the LP relaxations.

More than a million instances of various sizes were randomly generated to evaluate the model's effectiveness and efficiency. For every problem size release dates were sampled from uniform distribution $U[1, p(n-6)]$ to ensure that jobs are released throughout the whole time horizon. Instances which require idle time were skipped, as they can always easily be divided into several sub-problems. Weights were sampled from uniform distribution $U[1,30]$, as 30 different priority levels are typically enough to express differences between jobs.

The results are presented in the Table 4. First column describes the number of jobs $n$, second --- common processing time $p$, third --- number of instances generated and solved, fourth --- how many instances have been solved to optimality by the WSRPT heuristic, fifth --- for how many instances the LP solution found was integral, sixth --- how many of the instances have been solved to optimality by Algorithm 1, seventh --- how many of the instances have been solved to optimality by Algorithm 2, eighth  --- mean number of preemptions in the best of the optimal solutions found, lastly, ninth --- mean running time of the whole BnB complex algorithm.

All generated instances and optimal schedules can be found at \cite{benchmark}. 

The most important observation is that solutions to the LP relaxation are very often integral. For only 449 out of more than a million instances, which is less than $0.037\%$, solutions to the LP relaxation were fractional. Integral solutions to the LP relaxation can easily be converted into feasible schedules with the same objective function value. 

That shows how powerful the property described in Corollary 2.2 is. Because the constraints (3) based on it, which are the constraints distinguishing the scheduling problem from Assignment Problem, are restrictive and powerful enough, that even without Boolean constraints (4) the relaxed model very rarely deviates from them by producing fractional solutions. That effect has not been present in testing of other models, where schedule's feasibility was expressed differently. 

The WSRPT heuristic's accuracy deteriorates with growth of $n$. Percentage of optimal schedules produced by the WSRPT heuristic depending on the number of jobs $n$ can be seen in the Fig. 5. Despite the fact that the heuristic rarely finds optimal schedules for larger problems, it does find good approximations. Mean relative optimality gap did not exceed $0.03\%$ for all problem sizes. For particular instances optimality gap can exceed $5\%$ for small problems ($n \leq 20$). For larger instances ($n \geq 30$) optimality gap did not exceed $0.7\%$. 

It has to be mentioned that the WSRPT heuristic was designed for the general problem with different processing times. The heuristic was long considered to be state-of-the-art, but could not be tested on the problems of that size, as no efficient exact algorithms were known. The WSRPT heuristic performs especially well on smaller instances. On larger instances it often fails to find an optimal schedule, but still finds a very tight approximation to it. That once more confirms the WSRPT heuristic's effectiveness, especially considering how simple it is both to understand and to implement and how computationally efficient it is. 

It is harder to measure the effectiveness of Algorithm 1 and Algorithm 2, because they are only used when the LP solution is not integral, which does not happen often. Still, it can be seen that they do turn fractional solutions into feasible schedules, which are frequently optimal. 

Except for only three instances, every other instance was solved at the root of the BnB algorithm, because at least one out of three heuristics found a solution with the objective function value equal to the LP relaxation optimal value. Relative optimality gap reached 7\% for Algorithm 1 and 3\% for Algorithm 2 in smallest instances with $n=10$ and $p=2$. But for larger instances, excluding instances with $n=10$ and instances with $n=50, p=2$, relative optimality gap was always less than $0.2\%$ for both Algorithms. 

Even though the complexity of every part of the BnB algorithm depends on $np$, rather than $n$ or $p$ separately, instances with larger $n$ took more time than problems with the same $np$, but smaller $n$. Fig.6 shows how processing time depends on $np$. Possible reason behind that could be that for the same $np$, weight matrix $w_{jkt}$ in the BLP model is denser for problems with larger $n$, because there are $n$ non-zero rows for each job's last job-part. That also makes optimal objective function value larger and forces us to choose a larger substitute for $\infty$.

Also, we want to describe the number of preemptions in the optimal solutions. Preemptions are beneficial in scheduling problems, improving the optimal schedule's objective function value by $9.21\%$ on average (\cite{pmtn_improvement}). In our problem setting, we consider preemptions to be instant and cost-free, but in real-life applications that assumption is usually incorrect. Thus, if the BnB algorithm finds several optimal schedules, it selects a schedule with the fewest preemptions as the reported schedule. Relation between the mean number of preemptions in the reported schedule and $n$ is shown in Fig. 7. 

It is only natural that the number of preemptions grows with increase in number of jobs or increase in jobs' processing times. Natural, if new jobs appear throughout the whole time horizon, at least, because, as was proven in \cite{WSRPT}, a job can only be preempted in favor of another one at the latter's release date. And we deliberately chose distribution of release dates to create more possibilities for preemptions. Otherwise, after the latest release date scheduling of not yet started jobs would become trivial, because one would just have to put them in non-decreasing order of $p_j'/w_j$, where $p_j'$ is the remaining processing time of job $j$, see e.g. \cite{properties}.

In our testing, we reached the problem size of $np=800$. Remember that computationally hardest step of the heuristics is finding a solution to the LP relaxation, which has $O(n^2 p^2)$ variables and constraints. For $np = 800$ we have more than half a million variables and constraints. It becomes time consuming to solve problems of that size on a standard personal computer. The computational time complexity of the heuristics without solving the LP relaxation and without branching, which very rarely occurs, is only $O(n^2 p^2)$. LP problems in general have received a great deal of attention: there are many industrial solvers, which can be efficiently used on powerful machines and distributed systems. Therefore solving larger problems is absolutely possible with more powerful equipment and/or more time. 

State-of-the-art results can be found in \cite{Jar_Erk_2017} for the equal processing time case or in \cite{Jar_Erk_2020} for the case with different processing times. There, processing time were sampled in $U[1, 9]$ and algorithms could not always find an optimal solution for the instances with more than 10 jobs and could never find an optimal solution for the instances with more than 20 jobs in 2 hours. For the cases, where previous algorithms found an optimal solution, our algorithm takes significantly less time. 

\section{Model flexibility}
\label{sec:6}

In this section we will discuss possible modifications to the model. We have considered one of the simplest formulations of single machine scheduling problem with common processing times. This approach can be applied to many other variations of our single machine scheduling problem. However, two requirements have to be satisfied. First, the objective function must be a non-decreasing function of completion times. Second, it must be possible to precompute the objective function values independently for each job and express it with weights in the BLP model. An example of an objective function violating the first criterion is the Total Weighted Earliness and other similar objective functions. An example of an objective function violating the second criterion is the Maximum Lateness and other similar objective functions. Nevertheless, there are still a few popular settings, where our approach is applicable. 

\begin{enumerate}
    \item Several other objective functions like Total Weighted Tardiness, Total Weighted Number of Tardy Jobs.
    \item Deadlines. If a job must be finished by a certain point in time, we can either set infinite weight for variables corresponding to post-deadline processing or add a constraint, forbidding any post-deadline processing. 
    \item Maintenance and unavailability periods. The columns corresponding to such periods can be omitted and weights in the model changed accordingly to the time omitted.
\end{enumerate}

And many others, if only they can be expressed in changes to the weights or additional constraints. The efficiency of the approach for other settings is a target of future study. 

\section{Conclusion and further research}
\label{sec:7}

In this paper we have coined the Boolean Linear Programming model for solving the scheduling problem $1|pmtn; r_j, p_j=p|\sum w_j C_j$. We have proven several important properties of optimal schedules for a wide class of scheduling problems. These properties justify and are deeply ingrained in our model. 

To find an optimal schedule, we first solve the LP relaxation of the BLP model. Very often the solution to the LP relaxation is integral, which solves the problem immediately. If the solution is fractional, we propose two heuristics, which convert it to a feasible schedule. We embed our heuristics in the BnB algorithm, which solves the problem to optimality. 

Exhaustive computational experiments with up to 350 jobs show model's performance. Size of the problems solved substantially exceeds previous state-of-the-art results (\cite{Jar_Erk_2017}, \cite{Jar_Erk_2020}). The model is scalable and can be used for larger instances with more computational power or time provided. Also, the model is quite flexible and allows for numerous modifications to suit other environments.

First direction of future research is proving --- or disproving --- integrality of the objective function value of the LP relaxation of the BLP model for the TWCT objective function. If the integrality claim is correct, then the next step is to prove --- or disprove --- that it is possible to convert a fractional solution to integral one without increase in the objective function value. If a polynomial or pseudo-polynomial algorithm converting a fractional solution into a feasible schedule exists, that would establish the scheduling problem's complexity status.

Another direction of future research is to test the model's efficiency for preemptive single machine scheduling problems with different setting, e.g., problems with different objective functions, such as total weighted tardiness or weighted number of tardy jobs, problems with deadlines, problems with unavailability periods, just to mention a few.

Lastly, the study of the preemptive nature of the problem is of great interest. Since in practice preemptions typically are not cost-free it could be beneficial to minimize their number. Beyond minimization, it would be interesting to learn more about the "price of preemptions", i.e. the difference between the objective function values of optimal preemptive and non-preemptive schedules.


\bibliographystyle{dinat}
\bibliography{bib.bib}

\begin{thebibliography}{14}
\makeatletter
\newcommand{\dinatlabel}[1]%
{\ifNAT@numbers\else\NAT@biblabelnum{#1}\hspace{2\labelsep}\fi}
\makeatother
\expandafter\ifx\csname natexlab\endcsname\relax\def\natexlab#1{#1}\fi
\expandafter\ifx\csname url\endcsname\relax\def\url#1{\texttt{#1}}\fi

\bibitem[Batsyn u.\,a.(2014)Batsyn, Goldengorin, Pardalos und Sukhov]{WSRPT}
\dinatlabel{Batsyn u.\,a. 2014} \textsc{Batsyn}, M.~; \textsc{Goldengorin},
  B.~; \textsc{Pardalos}, P.~M.~; \textsc{Sukhov}, P.:
\newblock Online heuristic for the preemptive single machine scheduling problem
  of minimizing the total weighted completion time.
\newblock In: \emph{Optimization Methods and Software}
\newblock 29 (2014), S.~955--963

\bibitem[Batsyn u.\,a.(2013)Batsyn, Goldengorin, Sukhov und Pardalos]{bounds}
\dinatlabel{Batsyn u.\,a. 2013} \textsc{Batsyn}, M.~; \textsc{Goldengorin},
  B.~; \textsc{Sukhov}, P.~; \textsc{Pardalos}, P.~M.:
\newblock Lower and Upper Bounds for the Preemptive Single Machine Scheduling
  Problem with Equal Processing Times.
\newblock In: \emph{Springer Proceedings in Mathematics and Statistics}
\newblock 59 (2013), S.~11--27

\bibitem[Bouma(2009)]{Bouma}
\dinatlabel{Bouma 2009} \textsc{Bouma}, H.~W.:
\newblock \emph{All Minimal and Maximal Open Single Machine Scheduling Problems
  are Pseudo-Polynomially Solvable. University of Groningen, The Netherlands},
  Diplomarbeit, 2009

\bibitem[Bouma und Goldengorin(2010)]{article}
\dinatlabel{Bouma und Goldengorin 2010} \textsc{Bouma}, H.W.~;
  \textsc{Goldengorin}, B.:
\newblock Single machine scheduling: The problem $1|pmtn; p_j = p; r_j|\sum
  w_jT_j$ is pseudopolynomially solvable.
\newblock In: \emph{Discrete Optimization}
\newblock (2010)

\bibitem[Brucker und Knust(2006)]{compl}
\dinatlabel{Brucker und Knust 2006} \textsc{Brucker}, P.~; \textsc{Knust}, S.:
\newblock \emph{Complexity results for scheduling problems}.
\newblock 2006. --
\newblock URL \url{http://www.mathematik.uni-osnabrueck.de/research/OR/class/}

\bibitem[[dataset] u.\,a.(2020)[dataset], Fomin und Goldengorin]{benchmark}
\dinatlabel{[dataset] u.\,a. 2020} \textsc{[dataset]}~; \textsc{Fomin}, Artem~;
  \textsc{Goldengorin}, Boris:
\newblock Benchmark instances for the preemptive single machine scheduling of
  equal-length jobs minimizing Total Weighted Completion Time.
\newblock In: \emph{Mendeley data}
\newblock (2020), 9

\bibitem[Graham u.\,a.(1979)Graham, Lawler, Lenstra und Kan]{Graham}
\dinatlabel{Graham u.\,a. 1979} \textsc{Graham}, R.L.~; \textsc{Lawler}, E.L.~;
  \textsc{Lenstra}, J.K.~; \textsc{Kan}, A.H.G.Rinnooy:
\newblock Optimization and Approximation in Deterministic Sequencing and
  Scheduling: a Survey.
\newblock In: \emph{Annals of Discrete Mathematics}
\newblock 5 (1979), S.~287 -- 326. --
\newblock ISSN 0167-5060

\bibitem[Gurobi~Optimization(2020)]{gurobi}
\dinatlabel{Gurobi~Optimization 2020} \textsc{Gurobi~Optimization}, LLC:
\newblock \emph{Gurobi Optimizer Reference Manual}.
\newblock 2020. --
\newblock URL \url{http://www.gurobi.com}

\bibitem[Hendel u.\,a.(2009)Hendel, Runge und Sourd]{pmtn_improvement}
\dinatlabel{Hendel u.\,a. 2009} \textsc{Hendel}, Y.~; \textsc{Runge}, N.~;
  \textsc{Sourd}, F.:
\newblock The one-machine just-in-time scheduling problem with preemption.
\newblock In: \emph{Discrete Optimization}
\newblock 6 (2009), 2, S.~10--22

\bibitem[Horn(1974)]{horn}
\dinatlabel{Horn 1974} \textsc{Horn}, W.~A.:
\newblock Some simple scheduling algorithms.
\newblock In: \emph{Naval Research Logistics}
\newblock 21 (1974), S.~177--185

\bibitem[Jaramillo und Erkoc(2017)]{Jar_Erk_2017}
\dinatlabel{Jaramillo und Erkoc 2017} \textsc{Jaramillo}, F.~; \textsc{Erkoc},
  M.:
\newblock Minimizing total weighted tardiness and overtime costs for single
  machine preemptive scheduling.
\newblock In: \emph{Computers and Industrial Engineering}
\newblock 107 (2017), S.~109 -- 119. --
\newblock ISSN 0360-8352

\bibitem[Jaramillo u.\,a.(2020)Jaramillo, Keles und Erkoc]{Jar_Erk_2020}
\dinatlabel{Jaramillo u.\,a. 2020} \textsc{Jaramillo}, F.~; \textsc{Keles},
  B.~; \textsc{Erkoc}, M.:
\newblock Modeling single machine preemptive scheduling problems for
  computational efficiency.
\newblock In: \emph{Annals of Operations Research}
\newblock 285 (2020), S.~197--222

\bibitem[Lazarev und Kvaratskhelia(2010)]{properties}
\dinatlabel{Lazarev und Kvaratskhelia 2010} \textsc{Lazarev}, A.~A.~;
  \textsc{Kvaratskhelia}, A.~G.:
\newblock Properties of Optimal Schedules for the Minimization Total Weighted
  Completion Time in Preemptive Equal-length Job with Release Dates Scheduling
  Problem on a Single Machine.
\newblock In: \emph{Automation and Remote Control}
\newblock 71 (2010), S.~2085–2092

\bibitem[Zhongjun u.\,a.(2006)Zhongjun, Ng und Cheng]{lp_heuristics}
\dinatlabel{Zhongjun u.\,a. 2006} \textsc{Zhongjun}, T.~; \textsc{Ng}, C.~T.~;
  \textsc{Cheng}, T. C.~E.:
\newblock Single Machine Scheduling with Release Dates.
\newblock In: \emph{SIAM Journal on Discrete Mathematics}
\newblock 15 (2006), S.~165--192

\end{thebibliography}

\end{document}